
\documentclass[12pt]{elsarticle}
\usepackage{amsfonts}

%%%%%%%%%%%%%%%%%%%%%%%%%%%%%%%%%%%%%%%%%%%%%%%%%%%%%%%%%%%%%%%%%%%%%%%%%%%%%%%%%%%%%%%%%%%%%%%%%%%%%%%%%%%%%%%%%%%%%%%%%%%%%%%%%%%%%%%%%%%%%%%%%%%%%%%%%%%%%%%%%%%%%%%%%%%%%%%%%%%%%%%%%%%%%%%%%%%%%%%%%%%%%%%%%%%%%%%%%%%%%%%%%%%
\usepackage{amssymb}

%TCIDATA{OutputFilter=Latex.dll}
%TCIDATA{Version=5.50.0.2953}
%TCIDATA{<META NAME="SaveForMode" CONTENT="1">}
%TCIDATA{BibliographyScheme=Manual}
%TCIDATA{LastRevised=Thursday, January 29, 2015 18:25:13}
%TCIDATA{<META NAME="GraphicsSave" CONTENT="32">}

\newtheorem{theorem}{Theorem}

\newenvironment{proof}[1][Proof]{\noindent\textbf{#1.} }{\ \rule{0.5em}{0.5em}}
\begin{document}

\begin{frontmatter}

%% Title, authors and addresses

%% use the tnoteref command within \title for footnotes;
%% use the tnotetext command for theassociated footnote;
%% use the fnref command within \author or \address for footnotes;
%% use the fntext command for theassociated footnote;
%% use the corref command within \author for corresponding author footnotes;
%% use the cortext command for theassociated footnote;
%% use the ead command for the email address,
%% and the form \ead[url] for the home page:
%% \title{Title\tnoteref{label1}}
%% \tnotetext[label1]{}
%% \author{Name\corref{cor1}\fnref{label2}}
%% \ead{email address}
%% \ead[url]{home page}
%% \fntext[label2]{}
%% \cortext[cor1]{}
%% \address{Address\fnref{label3}}
%% \fntext[label3]{}

\title{Loewner Evolution as It\^{o} Diffusion}
\author{H\"{u}lya~Acar}
\ead{hulyaacar98@gmail.com}
\address{Department of Mathematics, Fatih University,
Istanbul, 34500, Turkey}
\author{Alexey L.~Lukashov}
\ead{alukashov@fatih.edu.tr}
\fntext[1,2]{The authors would like to thank the Scientific and Technological Research Council of Turkey (TUB{\.I}TAK) for the financial support (project no:113F369).}
\address{Department of Mathematics, Fatih University,
Istanbul, 34500, Turkey}
\address{Department of Mechanics and Mathematics,
Saratov State University, Saratov, 410012, Russia} 
\begin{abstract}

F. Bracci, M.D. Contreras, S. D\'{i}az Madrigal proved that any evolution family of order d is described by a generalized Loewner chain. G. Ivanov and A. Vasil'ev considered randomized version of the chain and found a substitution which transforms it to an It\^{o} diffusion.We generalize their result to vector randomized Loewner chain and prove there are no other possibilities to transform such Loewner chains to It\^{o} diffusions. 
\end{abstract}

\begin{keyword}
Loewner chain, Loewner equation, It\^{o} diffusion, Hergl\"{o}tz function.
\end{keyword}
\end{frontmatter}

%% \linenumbers

%% main text

\section{Introduction}

The Schramm-Loewner evolution (SLE), also known as a stochastic Loewner
evolution \cite{7,11} is a conformaly invariant stochastic process which
attracts many researchers during last 15 years. This process is a stochastic
generalization of the Loewner-Kufarev differential equations. SLE has the
domain Markov property which is closely related to the fact that the
equations can be represented as time homogeneous diffusion equations. There
are two main classes of SLE: Chordal SLE which gives a family of random
curves from two fixed boundary points and radial SLE which gives a family of
random curves from a fixed boundary point to a fixed interior point.

The radial equation was introduced by K. Loewner in 1923. The idea was to
represent domains by means of a family (known as Loewner chains) of
univalent functions defined on the unit disc and satisfying a suitable
differential equation.

The classical radial Loewner equation in the unit disc $\mathbb{D}$ $%
:=\left\{ \zeta \in 
%TCIMACRO{\U{2102} }%
%BeginExpansion
\mathbb{C}
%EndExpansion
:\left\vert \zeta \right\vert <1\right\} $ is the following differential
equation

\begin{equation}  \label{1}
\left\{ 
\begin{array}{c}
\frac{d\phi (z)}{dt}=G(\phi _{t}(z),t) \\ 
\phi _{0}(z)=z%
\end{array}%
\right.
\end{equation}
for almost every $t\in \left[ 0,\infty \right) $ where $G(w,t)=-wp(w,t)$
with the function $p:\mathbb{D\times }\left[ 0,\infty \right) \rightarrow 
\mathbb{C} $ measurable in $t$, holomorphic in $z$, $p(0,t)=1$ and $%
\Re(p(z,t))\geq0$ for all $z\in\mathbb{D}$ and $t\geq0$ (such functions $p$
are called Hergl\"{o}tz functions).

Recently Georgy Ivanov and Alexander Vasil'ev \cite{4} considered random
version of this Loewner differential equation with $G(w,t)=\frac{(\tau
(t)-w)^{2}p(w,t)}{\tau (t)}$ for

\begin{equation}  \label{2}
\tau (t)=\tau (t,w)=\exp (ikB_{t}(w)).
\end{equation}
They found a substitution which transforms the randomized Loewner equation
with $p(w,t)=\tilde{p}\left( \frac{w}{\tau (t)}\right) $ to an It\^{o}
diffusion and obtained the infinitesimal generator of the It\^{o} diffusion
in this form: 
\begin{equation}  \label{3}
A=\left( -\frac{z}{2}k^{2}+(1-z)^{2}\tilde{p}(z)\right) \frac{d}{dz}-\frac{1 
}{2}k^{2}z^{2}\frac{d^{2}}{dz^{2}}.
\end{equation}

The main result is an inverse statement. Namely we prove that under rather
general suppositions on $\tau (t)=\tau (t,B_{t})$, it is possible to find a
substitution which transforms (\ref{1}) to an It\^{o} diffusion if and only
if $\tau $ is given by (\ref{2}). We generalize this necessary and
sufficient condition for higher dimensions when $\tau $ depends on some
independent Brownian motions 
\[
\tau (t)=\tau (\mathbf{B}_{t}) 
\]
where $\mathbf{B}_{t}=(B_{t}^{1},B_{t}^{2},\ldots,B_{t}^{n}).$

We denote by $\check{C}$ the set of functions $f(z,\mathbf{x })$ from $C^{n}(%
\mathbb{D} \times \mathbb{R}^{n})$ such that these functions have continuous
derivatives up to order $n$, $\frac{\partial f}{\partial z}$ doesn't vanish
and $H(\mathbb{D})$ is the set of analytic functions in $\mathbb{D}$. 
\newline
\newline

\begin{theorem}
Consider Loewner random differential equation

\begin{equation}  \label{4}
\left\{ 
\begin{array}{c}
\frac{d\phi _{t}\left( z,w\right) }{dt}=\frac{\left( \tau _{1}\left(
t,w\right) -\phi _{t}\left( z,w\right) \right) ^{2}}{\tau _{1}\left(
t,w\right) }\tilde{p}(\frac{\phi _{t}\left( z,w\right) }{\tau _{1}\left(
t,w\right) }) \\ 
\phi _{0}\left( z,w\right) =z%
\end{array}%
\right.  \label{lee}
\end{equation}
where $\left\vert \tau _{1}\left( t,\omega \right) \right\vert =1$ for each
fixed $w\in \Omega $ ($\Omega $ is a sample space) and $\tilde{p}$ is an
arbitrary Hergl\"{o}tz function. Suppose $\psi _{t}=m(\phi
_{t},B_{t}^{(1)},B_{t}^{(2)},\ldots,B_{t}^{(n)})$ where $B_{t}^{(i)}$ are
independent Brownian motions, $m\in \breve{C}$ \ and $\tau _{1}\left(
t,\omega \right) =\tau (\mathbf{B}_{t})$ then, $\psi _{t}$ is an $n\times 1$
dimensional It\^{o} diffusion with coefficients from $H(\mathbb{D})$ for an
arbitrary Hergl\"{o}tz function $\tilde{p}$ if and only if $\tau (\mathbf{B}%
_{t})=e^{\mathbf{k}\cdot \mathbf{B}_{t}}$ where $\mathbf{k=(}%
k_{1},\ldots,k_{n}\mathbf{)}$ and $\mathbf{k\in \mathbb{R}}^{n}$.

Furthermore the infinitesimal generator of $\psi _{t}$ (when it is an It\^{o}
diffusion) is given by this form 
\begin{equation}  \label{5}
A=\left( -\frac{z}{2}\left\vert \mathbf{k}\right\vert ^{2}+(1-z)^{2}\tilde{p}%
(z)\right) \frac{d}{dz}-\frac{1}{2}\left\vert \mathbf{k}\right\vert ^{2}z^{2}%
\frac{d^{2}}{dz^{2}}.
\end{equation}
\end{theorem}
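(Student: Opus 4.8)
The plan is to reduce everything to a single application of the multidimensional It\^{o} formula and then read off what ``being an It\^{o} diffusion'' forces. Since for each fixed $\omega$ the process $\phi_t$ solves the pathwise ODE (\ref{4}), it is $C^1$ in $t$, hence of bounded variation, so it contributes no martingale part and $\langle \phi,B^{(i)}\rangle_t\equiv 0$; the only stochastic differentials come from the independent Brownian motions, with $d\langle B^{(i)},B^{(j)}\rangle_t=\delta_{ij}\,dt$. Writing $F(z,\tau)=\frac{(\tau-z)^2}{\tau}\tilde{p}(z/\tau)$ and $\tau=\tau(\mathbf{x})$, It\^{o}'s formula gives
\[
d\psi_t=\left(m_z\,F+\frac12\sum_{i=1}^n m_{x_ix_i}\right)dt+\sum_{i=1}^n m_{x_i}\,dB_t^{(i)},
\]
all derivatives of $m$ evaluated at $(\phi_t,\mathbf{B}_t)$. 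Thus $\psi_t$ is a time-homogeneous It\^{o} diffusion with coefficients in $H(\mathbb{D})$ exactly when each diffusion coefficient $m_{x_i}(z,\mathbf{x})$ and the drift $m_zF+\frac12\sum_i m_{x_ix_i}$ is an analytic function of $\psi=m(z,\mathbf{x})$ alone, and this must hold for every Hergl\"{o}tz $\tilde{p}$.

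For sufficiency I would take the substitution $m(z,\mathbf{x})=z/\tau$, i.e. $\psi=\phi/\tau$; since $|\tau|=1$ the exponent in $\tau=e^{\mathbf{k}\cdot\mathbf{x}}$ is purely imaginary, so I write $\tau=e^{i\mathbf{k}\cdot\mathbf{x}}$ with $\mathbf{k}\in\mathbb{R}^n$ (the one-dimensional case $\tau=e^{ikB_t}$). Then $m_{x_i}=-ik_i\psi$ and $m_{x_ix_i}=-k_i^2\psi$, while the change of variable collapses the nonlinearity, $F=\tau(1-\psi)^2\tilde{p}(\psi)$ and $m_zF=(1-\psi)^2\tilde{p}(\psi)$. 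Collecting terms, the drift equals $(1-\psi)^2\tilde{p}(\psi)-\frac12|\mathbf{k}|^2\psi$ and the $i$-th diffusion coefficient is $-ik_i\psi$, so $\sum_i\sigma_i^2=-|\mathbf{k}|^2\psi^2$; both are functions of $\psi$ alone and analytic for \emph{every} Hergl\"{o}tz $\tilde{p}$, and reading off $A=b\,\partial+\frac12\left(\sum_i\sigma_i^2\right)\partial^2$ yields precisely (\ref{5}).

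The substance is necessity. Assuming $\psi_t$ is such a diffusion, write $m_{x_i}=\sigma_i(m)$. Equality of mixed partials $\partial_{x_j}m_{x_i}=\partial_{x_i}m_{x_j}$ becomes $\sigma_i'\sigma_j=\sigma_j'\sigma_i$, i.e. $(\sigma_i/\sigma_j)'=0$, so all diffusion coefficients are proportional: $\sigma_i=c_i\,s$ for constants $c_i$ and a single nonvanishing function $s$ (nonvanishing since $m\in\breve{C}$). Integrating $\nabla_{\mathbf{x}}m=s(m)\,\mathbf{c}$ along $\mathbf{c}$ shows $m$ depends on $\mathbf{x}$ only through $\eta:=\mathbf{c}\cdot\mathbf{x}$, with $m=S^{-1}(\eta+q(z))$ where $S'=1/s$ and $q'\neq0$. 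Substituting into the drift condition, the $\tilde{p}$-independent part $\frac12|\mathbf{c}|^2 s'(m)s(m)$ is already a function of $m$, so the requirement collapses to: $R:=q'(z)\frac{(\tau-z)^2}{\tau}\tilde{p}(z/\tau)$ is a function of $w:=\eta+q(z)$ for every Hergl\"{o}tz $\tilde{p}$.

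From here the arbitrariness of $\tilde{p}$ does all the work, and this is where I expect the real difficulty. First, since $R$ depends on $\mathbf{x}$ only through $\eta$, any two points with the same $\eta$ and the same $z$ must give the same value of $\frac{(\tau-z)^2}{\tau}\tilde{p}(z/\tau)$; because the Hergl\"{o}tz class is rich enough to separate distinct arguments $z/\tau_1\neq z/\tau_2$ (both in $\mathbb{D}$ as $|\tau|=1$), this forces $\tau=T(\eta)$. Next I would test $\tilde{p}\equiv1$ and the Cayley transform $\tilde{p}(u)=\frac{1+u}{1-u}$: their ratio gives $\frac{T+z}{T-z}$, whence $z/T=g(w)$, and feeding this back yields $q'(z)\,T=h(w)$. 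Thus $z=T(\eta)\,g(\eta+q(z))$, and differentiating in $\eta$ at fixed $z$ gives $T'/T=-g'/g$, the left side a function of $\eta$ and the right a function of $w=\eta+q(z)$; varying $z$ so that $q(z)$ sweeps an interval while $\eta$ is fixed forces both to equal a constant $\lambda$, so $T(\eta)=T(0)e^{\lambda\eta}$. The constraint $|\tau|=1$ pins $|T(0)|=1$ and makes $\lambda$ imaginary, and writing $\lambda\mathbf{c}=i\mathbf{k}$ recovers $\tau(\mathbf{B}_t)=e^{\mathbf{k}\cdot\mathbf{B}_t}$. The main obstacle is precisely this passage from ``holds for all Hergl\"{o}tz $\tilde{p}$'' to pointwise algebraic identities, followed by the separation-of-variables step; once $\tau$ is exponential, the normalization $\psi_0=z$ forces $s(m)\propto m$, recovering the canonical substitution and the generator (\ref{5}).
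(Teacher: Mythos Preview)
Your sufficiency argument and the opening of your necessity argument (It\^{o} formula, proportionality $\sigma_i=c_i s$ via mixed partials, and the representation $m=S^{-1}(\mathbf{c}\cdot\mathbf{x}+q(z))$) coincide with the paper's. From that point on, however, you take a genuinely different and more elementary route. The paper differentiates the drift identity $q'(x)\frac{(\tau-x)^2}{\tau}\tilde p(x/\tau)=H_1(\mathbf{c}\cdot\mathbf{y}+q(x))$ once in $x$, once in $y_i$, eliminates $H_1'$, and differentiates again in $y_i$; this produces an identity that is quadratic in $\tilde p,\tilde p',\tilde p''$, and the authors invoke linear independence of the six products $(\tilde p)^2,\tilde p\tilde p',(\tilde p')^2,\tilde p\tilde p'',\tilde p'\tilde p'',(\tilde p'')^2$ (checked on the family $\tilde p(w)=\tfrac{1}{1-w}+a$) to extract the ODE $q'=-xq''$ and the PDE system $(\partial_{y_i}\tau)^2=\tau\,\partial_{y_i}^2\tau$, which they then solve by an inductive reduction on the number of variables. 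Your argument replaces all of this: testing $\tilde p\equiv 1$ already forces $\tau$ to factor through $\eta=\mathbf{c}\cdot\mathbf{x}$, and testing the Cayley transform against $\tilde p\equiv 1$ gives $z/T(\eta)=g(\eta+q(z))$; differentiating in $\eta$ and separating variables yields $T'/T=\mathrm{const}$ directly. This is shorter and avoids both the independence verification and the PDE system; the paper's approach, on the other hand, is more mechanical and would adapt more readily if the admissible class of $\tilde p$ were narrowed so that your two test functions were unavailable.

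Two small points. First, your claim that $s$ is nonvanishing ``since $m\in\breve{C}$'' is not quite what $\breve{C}$ says: that class only guarantees $m_z\neq 0$, not $m_{x_i}\neq 0$; the paper's proof makes the same tacit assumption when dividing by $f_i$, so you are in good company, but it is worth flagging. Second, your closing sentence that ``$\psi_0=z$ forces $s(m)\propto m$'' is neither needed nor obviously correct: the theorem does not normalise $\psi_0$, different admissible $m$'s give Ito diffusions with conjugate generators, and the specific form (\ref{5}) refers to the particular substitution $\psi=\phi/\tau$ already computed in the sufficiency part---exactly as the paper handles it.
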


\begin{proof}
For $n=1$ sufficiency part was proved by G. Ivanov and A. Vasilev. We use
similar argument to prove sufficiency for arbitrary $n$. By the complex It%
\^{o} formula, the process 
\[
\frac{1}{\tau (\mathbf{B}_{t})}=e^{-ik_{1}B_{t}^{(1)}-ik_{2}B_{t}^{(2)}-%
\ldots-ik_{n}B_{t}^{(n)}} 
\]%
\ satisfies the stochastic differential equation (SDE)

\begin{equation}  \label{6}
\begin{array}{c}
d(e^{-ik_{1}B_{t}^{(1)}-\ldots-ik_{n}B_{t}^{(n)}})=-\sum%
\limits_{j=1}^{n}ik_{j}e^{-ik_{1}B_{t}^{(1)}-%
\ldots-ik_{n}B_{t}^{(n)}}dB_{t}^{(j)} \\ 
\\ 
-\frac{1}{2}\sum\limits_{j=1}^{n}k_{j}^{2}e^{-ik_{1}B_{t}^{(1)}-%
\ldots-ik_{n}B_{t}^{(n)}}dt.%
\end{array}%
\end{equation}

Let us denote $\psi _{t}(z,w)=\frac{\phi _{t}(z,w)}{\tau (\mathbf{B}_{t})}$.
Applying the integration by parts formula for\ $\psi _{t}$, we obtain

\begin{equation}  \label{7}
\begin{array}{c}
d(\psi _{t})=\phi
_{t}d(e^{-ik_{1}B_{t}^{(1)}-%
\ldots-ik_{n}B_{t}^{(n)}})+(e^{-ik_{1}B_{t}^{(1)}-%
\ldots-ik_{n}B_{t}^{(n)}})d\phi _{t} \\ 
\\ 
=e^{-ik_{1}B_{t}^{(1)}-\ldots-ik_{n}B_{t}^{(n)}}\frac{\left(
e^{ik_{1}B_{t}^{(1)}+\ldots+ik_{n}B_{t}^{(n)}}-\phi _{t}\left( z,w\right)
\right) ^{2}}{e^{ik_{1}B_{t}^{(1)}+\ldots+ik_{n}B_{t}^{(n)}}}\tilde{p}(\psi
_{t})dt \\ 
\\ 
-i\psi _{t}\sum\limits_{j=1}^{n}k_{j}dB_{t}^{(j)}-\frac{\psi _{t}}{2}%
\sum\limits_{j=1}^{n}k_{j}^{2}dt \\ 
\\ 
=-i\psi _{t}\mathbf{k}\cdot d\mathbf{B}_{t}+(-\frac{\left\vert \mathbf{k}%
\right\vert ^{2}}{2}\psi _{t}+(\psi _{t}-1)^{2}\tilde{p}(\psi _{t}))dt%
\end{array}%
\end{equation}

So $\psi _{t}$ is an It\^{o} diffusion in $%
%TCIMACRO{\U{211d} }%
%BeginExpansion
\mathbb{R}
%EndExpansion
^{n}$.

Now for the necessity part, from our supposition $\psi _{t}=m(\phi
_{t},B_{t}^{(1)},\ldots,B_{t}^{(n)})$. Apply It\^{o} formula;

\begin{equation}  \label{8}
d(\psi _{t})=\frac{\partial m}{\partial x}d\phi _{t}+\sum\limits_{i=1}^{n}%
\frac{\partial m}{\partial y_{i}}dB_{t}^{(i)}+\frac{1}{2}\sum%
\limits_{i=1}^{n}\frac{\partial ^{2}m}{\partial y_{i}^{2}}dt.
\end{equation}
and if (\ref{8}) is an $n\times 1$ dimensional It\^{o} diffusion with
analytic coefficients then there are functions $f_{i}\in H(\mathbb{D})$ such
that

\begin{equation}  \label{9}
\frac{\partial m}{\partial y_{i}}=f_{i}(m(x,\mathbf{y})).
\end{equation}
Taking derivative of (\ref{9}) with respect to $y_{j}$ we obtain

\[
\frac{f_{i}^{\prime }(m(x,\mathbf{y}))}{f_{i}(m(x,\mathbf{y}))}=\frac{%
f_{j}^{\prime }(m(x,\mathbf{y}))}{f_{j}(m(x,\mathbf{y}))}.
\]
Hence 
\[
(\ln f_{i}(z))^{\prime }=(\ln f_{j}(z))^{\prime } 
\]
and 
\begin{equation}  \label{10}
f_{i}(z)=c_{ij}f_{j}(z).
\end{equation}
Let us denote 
\begin{equation}  \label{11}
f_{i}(z)=c_{i}f(z)
\end{equation}
and let $F(z)$ be an antiderivative of $\frac{1}{f(z)}$, hence%
\[
F(m(x,\mathbf{y}))=\mathbf{c}\cdot \mathbf{y}+q(x) 
\]
where $\mathbf{c=(}c_{1},\ldots,c_{n}\mathbf{)}$.

Since by supposition $F^{\prime }$ doesn't vanish, there exists an inverse
function $F^{-1}$ and 
\begin{equation}  \label{12}
m(x,\mathbf{y})=F^{-1}(\mathbf{c}\cdot \mathbf{y}+q(x)).
\end{equation}
Let us denote 
\begin{equation}  \label{13}
F^{-1}(z)=G(z).
\end{equation}
Now for coefficients in $dt$, we have%
\[
\frac{\partial m}{\partial x}d\Phi _{t}+\frac{1}{2}\sum\limits_{i=1}^{n}%
\frac{\partial ^{2}m}{\partial y_{i}^{2}}dt=g(G(\mathbf{c}\cdot \mathbf{y}%
+q(x))dt. 
\]
where $g$ is an analytic function in $\mathbb{D}$. If we substitute (\ref{9}%
, \ref{11}, \ref{13}) then we get, 
\begin{equation}  \label{14}
\begin{array}{c}
G^{\prime }(\mathbf{c}\cdot \mathbf{y}+q(x))q^{\prime }(x)\frac{d\phi _{t}}{%
dt}+f^{\prime }(G(\mathbf{c}\cdot \mathbf{y}+q(x)))f(G(\mathbf{c}\cdot 
\mathbf{y}+q(x)))\frac{1}{2}\sum\limits_{i=1}^{n}c_{i}^{2} \\ 
\\ 
=g(G(\mathbf{c}\cdot \mathbf{y}+q(x))).%
\end{array}%
\end{equation}

\begin{equation}  \label{15}
G^{\prime }(\mathbf{c}\cdot \mathbf{y}+q(x))q^{\prime }(x)\frac{(\tau (%
\mathbf{y})-x)^{2}}{\tau (\mathbf{y})}\tilde{p}(\frac{x}{\tau (\mathbf{y})}%
)=g_{1}(\mathbf{c}\cdot \mathbf{y}+q(x))
\end{equation}
where we denote $%
\begin{array}{c}
\\ 
g_{1}(z)=g(z)-f^{\prime }(z)f(z)\frac{1}{2}\sum\limits_{i=1}^{n}c_{i}^{2} .
\\ 
\end{array}%
$

Definition of (\ref{12}) shows that $G^{\prime }(\mathbf{c}\cdot \mathbf{y}%
+q(x))$ doesn't vanish. Hence $g_{1}(\mathbf{c}\cdot \mathbf{y}+q(x))$ is
not identically zero. So equation (\ref{15}) can be written as

\begin{equation}  \label{16}
q^{\prime }(x)\frac{(\tau (\mathbf{y})-x)^{2}}{\tau (\mathbf{y})}\tilde{p}(%
\frac{x}{\tau (\mathbf{y})})=H_{1}(\mathbf{c}\cdot \mathbf{y}+q(x))
\end{equation}
where $H_{1}(z)=\frac{g_{1}(z)}{G^{\prime }(z)}$.

Let us differentiate it with respect to $x$ and $y_{i}$. Then we obtain two
equalities:

\begin{equation}  \label{17}
\begin{array}{c}
q^{\prime \prime }(x)\frac{(\tau (\mathbf{y})-x)^{2}}{\tau (\mathbf{y})}%
\tilde{p}(\frac{x}{\tau (\mathbf{y})})-2q^{\prime }(x)\frac{(\tau (\mathbf{y}%
)-x)}{\tau (\mathbf{y})}\tilde{p}(\frac{x}{\tau (\mathbf{y})})+q^{\prime }(x)%
\frac{(\tau (\mathbf{y})-x)^{2}}{\tau ^{2}(\mathbf{y})}\tilde{p}^{\prime }(%
\frac{x}{\tau (\mathbf{y})}) \\ 
\\ 
=q^{\prime }(x)H_{1}^{\prime }(\mathbf{c}\cdot \mathbf{y}+q(x))%
\end{array}%
\end{equation}
and 
\begin{equation}  \label{18}
\begin{array}{c}
q^{\prime }(x)\frac{(\tau (\mathbf{y})-x)\frac{\partial \tau (\mathbf{y})}{%
\partial y_{i}}(\tau (\mathbf{y})+x)}{\tau ^{2}(\mathbf{y})}\tilde{p}(\frac{x%
}{\tau (\mathbf{y})})-q^{\prime }(x)\frac{(\tau (\mathbf{y})-x)^{2}\frac{%
\partial \tau (\mathbf{y})}{\partial y_{i}}x}{\tau ^{2}(\mathbf{y})}\tilde{p}%
^{\prime }(\frac{x}{\tau (\mathbf{y})}) \\ 
\\ 
=c_{i}H_{1}^{\prime }(\mathbf{c}\cdot \mathbf{y}+q(x)); \ \ i=1,2,\ldots,n.%
\end{array}%
\end{equation}
Now (\ref{17}) and (\ref{18}) imply,

\begin{equation}  \label{19}
\begin{array}{c}
q^{\prime \prime }(x)\frac{(\tau (\mathbf{y})-x)^{2}}{\tau (\mathbf{y})}%
\tilde{p}\left( \frac{x}{\tau (\mathbf{y})}\right) -2q^{\prime }(x)\frac{%
(\tau (\mathbf{y})-x)}{\tau (\mathbf{y})}\tilde{p}\left( \frac{x}{\tau (%
\mathbf{y})}\right) +q^{\prime }(x)\frac{(\tau (\mathbf{y})-x)^{2}}{\tau
^{2}(\mathbf{y})}\tilde{p}^{\prime }\left( \frac{x}{\tau (\mathbf{y})}\right)
\\ 
\\ 
=\frac{q^{\prime }(x)}{c_{i}}\left[ q^{\prime }(x)\frac{(\tau (\mathbf{y})-x)%
\frac{\partial \tau (\mathbf{y})}{\partial y_{i}}(\tau (\mathbf{y})+x)}{\tau
^{2}(\mathbf{y})}\tilde{p}\left( \frac{x}{\tau (\mathbf{y})}\right)
-q^{\prime }(x)\frac{(\tau (\mathbf{y})-x)^{2}\frac{\partial \tau (\mathbf{y}%
)}{\partial y_{i}}x}{\tau ^{2}(\mathbf{y})}\tilde{p}^{\prime }\left( \frac{x%
}{\tau (\mathbf{y})}\right) \right]; \\ 
i=1,2,\ldots,n.%
\end{array}%
\end{equation}
Take derivative of (\ref{19}) with respect to $y_{i}$ again ; 
\begin{equation}  \label{20}
\begin{array}{c}
\left[ \tilde{p}^{\prime }\left( \frac{x}{\tau (\mathbf{y})}\right) \frac{x%
\frac{\partial \tau (\mathbf{y})}{\partial y_{i}}}{\tau ^{2}(\mathbf{y})}%
(-q^{\prime \prime }(x)(\tau (\mathbf{y})-x)+3q^{\prime }(x))-\frac{%
q^{\prime }(x)x\frac{\partial \tau (\mathbf{y})}{\partial y_{i}}(\tau (%
\mathbf{y})-x)}{\tau ^{3}(\mathbf{y})}\tilde{p}^{\prime \prime }\left( \frac{%
x}{\tau (\mathbf{y})}\right) \right. \\ 
\\ 
\left. +q^{\prime \prime }(x)\frac{\partial \tau (\mathbf{y})}{\partial y_{i}%
}\tilde{p}\left( \frac{x}{\tau (\mathbf{y})}\right) \right] \cdot \left[
q^{\prime }(x)\frac{\frac{\partial \tau (\mathbf{y})}{\partial y_{i}}(\tau (%
\mathbf{y})+x)}{\tau (\mathbf{y})}\tilde{p}\left( \frac{x}{\tau (\mathbf{y})}%
\right) -q^{\prime }(x)\frac{(\tau (\mathbf{y})-x)\frac{\partial \tau (%
\mathbf{y})}{\partial y_{i}}x}{\tau ^{2}(\mathbf{y})}\tilde{p}^{\prime
}\left( \frac{x}{\tau (\mathbf{y})}\right) \right] \\ 
\\ 
=\left[ q^{\prime }(x)\frac{\frac{\partial ^{2}\tau (\mathbf{y})}{\partial
y_{i^{2}}}\tau (\mathbf{y})(\tau (\mathbf{y})+x)-(\frac{\partial \tau (%
\mathbf{y})}{\partial y_{i}})^{2}x}{\tau ^{2}(\mathbf{y})}\tilde{p}\left( 
\frac{x}{\tau (\mathbf{y})}\right) -\frac{q^{\prime }(x)}{\tau ^{3}(\mathbf{y%
})}(x\frac{\partial ^{2}\tau (\mathbf{y})}{\partial y_{i^{2}}}\tau (\mathbf{y%
})(\tau (\mathbf{y})-x)\right. \\ 
\\ 
\left. +3x^{2}(\frac{\partial \tau (\mathbf{y})}{\partial y_{i}})^{2})\tilde{%
p}^{\prime }\left( \frac{x}{\tau (\mathbf{y})}\right) +q^{\prime }(x)\frac{%
(\tau (\mathbf{y})-x))x^{2}(\frac{\partial \tau (\mathbf{y})}{\partial y_{i}}%
)^{2}}{\tau ^{4}(\mathbf{y})}\tilde{p}^{\prime \prime }\left( \frac{x}{\tau (%
\mathbf{y})}\right) \right] \cdot \left[ q^{\prime \prime }(x)(\tau (\mathbf{%
y})-x)\tilde{p}\left( \frac{x}{\tau (\mathbf{y})}\right) \right. \\ 
\\ 
\left. -2q^{\prime }(x)\tilde{p}\left( \frac{x}{\tau (\mathbf{y})}\right)
+q^{\prime }(x)(\tau (\mathbf{y})-x)\tilde{p}^{\prime }\left( \frac{x}{\tau (%
\mathbf{y})}\right) \right]; \ \ i=1,2,\ldots,n. \\ 
\end{array}%
\end{equation}

Observe that the functions $(\tilde{p}^{\prime }\left( z\right) )^{2}$, $%
\tilde{p}^{\prime }\left( z\right) \tilde{p}\left( z\right) $, $\left( 
\tilde{p}\left( z\right) \right) ^{2}$, $\tilde{p}^{\prime \prime }\left(
z\right) \tilde{p}\left( z\right) $, $\tilde{p}^{\prime \prime }\left(
z\right) \tilde{p}^{\prime }\left( z\right)$ and $\left( \tilde{p}^{\prime
\prime }\left( z\right) \right) ^{2}$,where $\tilde{p}$ are arbitrary Hergl%
\"{o}tz functions, are independent. In fact they are independent even for $%
\tilde{p}(w)=\frac{1}{1-w}+a,a>0$, what can be checked by straightforward
calculations. Hence coefficients in $(\tilde{p}^{\prime }\left( z\right)
)^{2}$, $\tilde{p}^{\prime }\left( z\right) \tilde{p}\left( z\right) $, $%
\left( \tilde{p}\left( z\right) \right) ^{2}$, $\tilde{p}^{\prime \prime
}\left( z\right) \tilde{p}\left( z\right) $, $\tilde{p}^{\prime \prime
}\left( z\right) \tilde{p}^{\prime }\left( z\right) $and $\left( \tilde{p}%
^{\prime \prime }\left( z\right) \right) ^{2}$ in the left and right hand
part of (\ref{20})coincide. \newline

In particular,

\begin{equation}  \label{21}
x\left( \frac{\partial \tau (\mathbf{y})}{\partial y_{i}}\right)
^{2}q^{\prime \prime }(x)=-\tau (\mathbf{y})\frac{\partial ^{2}\tau (\mathbf{%
y})}{\partial y_{i}^{2}}q^{\prime }(x)
\end{equation}
and 
\begin{equation}  \label{22}
q^{\prime }(x)=-xq^{\prime \prime }(x).
\end{equation}
Then 
\begin{equation}  \label{23}
q(x)=\alpha \ln x,
\end{equation}
where $\alpha$ is a constant. If we substitute this in (\ref{21}) then we
obtain

\begin{equation}  \label{24}
\left( \frac{\partial \tau (\mathbf{y})}{\partial y_{i}}\right) ^{2}=\tau (%
\mathbf{y})\frac{\partial ^{2}\tau (\mathbf{y})}{\partial y_{i}^{2}}, \ \
1\leq i\leq n.
\end{equation}
It is easy to see that any solution of system (\ref{24}) can be written as

\[
\begin{array}{c}
\\ 
\tau (\mathbf{y})=h_{1}(y_{2},\ldots ,y_{n})e^{y_{1}g_{1}(y_{2},\ldots
,y_{n})}=h_{2}(y_{1},y_{3},\ldots ,y_{n})e^{y_{2}g_{2}(y_{1},y_{3},\ldots
,y_{n})}= \\ 
\\ 
\cdot \cdot \cdot =h_{n}(y_{1},\ldots ,y_{n-1})e^{y_{n}g_{n}(y_{1},\ldots
,y_{n-1})}, \\ 
\end{array}%
\newline
\]%
\newline
where $h_{1},\ldots ,h_{n}$ are sufficiently smooth functions.Then%
\begin{equation}
\frac{\partial ^{n}\ln \tau (\mathbf{y})}{\partial y_{1}\ldots \partial y_{n}%
}=\frac{\partial ^{n-1}g_{1}(y_{2},\ldots ,y_{n})}{\partial y_{2}\ldots
\partial y_{n}}=\cdot \cdot \cdot =\frac{\partial ^{n-1}\partial
g_{n}(y_{1},\ldots ,y_{n-1})}{\partial y_{1}\ldots \partial y_{n-1}}=c.
\label{25}
\end{equation}%
It gives the general solution of (\ref{24}) 
\begin{equation}
\ln \tau (\mathbf{y})=cy_{n}\ldots y_{1}+\tilde{g}_{1}(y_{2},\ldots
,y_{n})+\ldots +\tilde{g}_{n}(y_{1},\ldots ,y_{n-1}),  \label{26}
\end{equation}%
where $\tilde{g}_{1},\ldots ,\tilde{g}_{n}$ are arbitrary sufficiently
smooth functions.If we put this $\tau (\mathbf{y})$ in (\ref{19}) and take
coefficient of $\tilde{p}^{\prime }\left( z\right) $, then we obtain

\begin{equation}  \label{27}
\frac{c_{i}}{t_{i}}=\prod\limits_{k=1}^{n}y_{k}+\sum\limits_{k=1,k\neq i}^{n}%
\frac{\partial \tilde{g}_{k}(y_{1},...,\tilde{y}_{i},...,y_{n})}{\partial
y_{i}}
\end{equation}

Taking derivative $\frac{\partial ^{n-1}}{\partial y_{2}...\partial y_{n}}$
of (\ref{27}) with $i=1$, we obtain $c=0$.

Moreover we claim that (\ref{26}) and (\ref{27}) imply $\tau (\mathbf{y}%
)=\exp (\mathbf{K}\cdot \mathbf{y})$. Indeed for $n=1$ it follows
immediately from (\ref{24}). For $n>1$ we take derivative of (\ref{27}) with
respect to $y_{2},\ldots,y_{n-1}$ we obtain 
\[
\frac{\partial ^{n-1}\tilde{g}_{n}(y_{1},\ldots,y_{n-1})}{\partial
y_{1}\ldots\partial y_{n-1}}=0. 
\]

Hence $\tilde{g}_{n}$ can be written as sum of functions of $n-2$ variables.
By induction it implies $\tau (\mathbf{y})=\exp
(\sum\limits_{i=1}^{n}K_{i}(y_{i}))$, and application of (\ref{24}) finishes
the proof of the necessity part.

Now $[7,$ Theorem $7.3.3]$ says that the generator $A$ of the process $\psi
_{t}$ from (\ref{7}) is given by (\ref{5}).
\end{proof}

\section*{References}

\end{document}